  \newcommand{\textcyr}[1]{%
    {\fontencoding{OT2}\fontfamily{wncyr}\fontseries{m}\fontshape{n}%
     \selectfont #1}}
\newcommand{\Sha}{{\mbox{\textcyr{Sh}}}}
\newcommand{\cA}{\mathcal{A}}
\newcommand{\ra}{\rightarrow}
\newcommand{\lra}{\longrightarrow}
\newcommand{\del}{\delta}
\newcommand{\tensor}{\otimes}
\newcommand{\ssstyle}{\scriptscriptstyle}
\newcommand{\Q}{\mathbf{Q}}
\newcommand{\G}{{\mathbb G}}
\newcommand{\cX}{\mathcal{X}}
\newcommand{\Z}{\mathbf{Z}}
\newcommand{\kbar}{\overline{k}}
\newcommand{\isom}{\cong}
\renewcommand{\O}{\mathcal{O}}
\DeclareMathOperator{\Br}{Br}
\DeclareMathOperator{\Gal}{Gal}
\DeclareMathOperator{\Ima}{Im}
\DeclareMathOperator{\Pic}{Pic}
\DeclareMathOperator{\Spec}{Spec}
\DeclareMathOperator{\Div}{Div}
\DeclareMathOperator{\Ker}{Ker}
\DeclareMathOperator{\ur}{nr}
\DeclareMathOperator{\Hom}{Hom}
\DeclareMathOperator{\TT}{TT}
\theoremstyle{plain}
\newtheorem{theorem}{Theorem}[section]
\newtheorem{corollary}[theorem]{Corollary}
\theoremstyle{definition}
\theoremstyle{remark}
\newtheorem{remark}[theorem]{Remark}
\numberwithin{equation}{section}
\newcommand{\thetitle}
{Groups of components of N{\'e}ron models of Jacobians and Brauer groups}
\begin{document}

\title{\thetitle}
\author{Saikat Biswas}
\address{Division of Mathematics, University of Minnesota-Morris,
Morris, MN 56267}
\email{biswass@morris.umn.edu}

\begin{abstract}
Let $X$ be a proper, smooth, and geometrically connected curve over a non-archimedean local field $K$. In this paper, we relate the component group of the N{\'e}ron model of the Jacobian of $X$ to the Brauer group of $X$.
\end{abstract}

\maketitle


\section{Introduction}
Let $K$ be a non-archimedean local field. Thus $K$ is a complete discrete valuation field with finite residue field $k$. Let $K^{\ur}$ be the maximal unramified extension of $K$. Let $X$ be a proper, smooth, and geometrically connected curve over $K$, and $X^{\ur}=X\otimes_{K}{K^{\ur}}$ be the corresponding curve over $K^{\ur}$. Let $\del$ and $\del'$ denote, respectively, the \emph{index} and the \emph{period} of $X$, and let $\del^{\ur}$ and ${\del^{\ur}}'$ denote the corresponding quantities associated to $X^{\ur}$.
Let $A$ be the \emph{Jacobian variety} of $X$ over $K$, $\Phi_{\ssstyle{A}}$ the $k$-group scheme of connected components of the N{\'e}ron model of $A$, and $c_{\ssstyle{A}}=\#\Phi_{\ssstyle{A}}(k)$ the corresponding \emph{Tamagawa number} of $A$ at $K$. Consider the Brauer-Grothendieck group $\Br(X)=H^2(X,\G_{m})$. Let $\Br_{0}(X)$ denote the image of $\Br(K)\to\Br(X)$, and $\Br_{\ur}(X)$ denote the kernel of $\Br(X)\to\Br(X^{\ur})$. In this paper, we prove

\begin{theorem}[Main Theorem]\label{main}
There exists an exact sequence
$$0 \to \Hom\big(\Br_{\ur}(X)/\Br_{0}(X),\Q/\Z\big) \to \Phi_{\ssstyle{A}}(k) \to \Z/d\Z \to 0$$
where $d={\del}'/{\del^{\ur}}'$.
\end{theorem}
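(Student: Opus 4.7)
The plan is to describe both $\Phi_{\ssstyle A}(k)$ and $\Br_{\ur}(X)/\Br_0(X)$ as (Pontryagin duals of) Galois cohomology groups on the Picard objects of $X^{\ur}$, and then extract the asserted sequence by dualizing the degree short exact sequence. Write $G := \Gal(K^{\ur}/K) \cong \hat{\Z}$ throughout.

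I would begin with the Hochschild--Serre spectral sequence $E_2^{p,q} = H^p(G, H^q(X^{\ur}, \G_m)) \Rightarrow H^{p+q}(X, \G_m)$: together with $\Br(K^{\ur}) = 0$, Hilbert~90, and $H^{\ge 3}(G, K^{\ur,\times}) = 0$, this should collapse the filtration on $\Br(X)$ enough to identify $F^2 = \Br_0(X)$ and $F^1 = \Br_{\ur}(X)$, giving
$$\Br_{\ur}(X)/\Br_0(X) \;\cong\; H^1(G, \Pic(X^{\ur})).$$
Next I would apply $G$-cohomology to the degree sequence $0 \to \Pic^0(X^{\ur}) \to \Pic(X^{\ur}) \to {\del^{\ur}}'\Z \to 0$. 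Since $H^1(G, \Z) = 0$, the relevant truncation reads
$$0 \to {\del^{\ur}}'\Z/\deg(\Pic(X^{\ur})^G) \to H^1(G, \Pic^0(X^{\ur})) \to H^1(G, \Pic(X^{\ur})) \to 0.$$
Descent for the representable relative Picard scheme (combined with $\Pic(K^{\ur}) = \Br(K^{\ur}) = 0$) identifies $\Pic(X^{\ur})^G$ with $\mathrm{Pic}_{X/K}(K)$, whose degree image is $\del'\Z$ by definition of the period; the cokernel is therefore $\Z/d\Z$.

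The crux is the duality $\Phi_{\ssstyle A}(k) \cong \Hom(H^1(G, \Pic^0(X^{\ur})), \Q/\Z)$. Since $A(K^{\ur}) = \Pic^0(X^{\ur})$ (via $\Br(K^{\ur}) = 0$), the N\'eron sequence $0 \to \cA^0(\O_{K^{\ur}}) \to A(K^{\ur}) \to \Phi_{\ssstyle A}(\kbar) \to 0$, together with Lang's theorem (applied to the smooth connected special fiber of $\cA^0$) and additive Hilbert~90 (for the kernel of reduction), should give $H^1(G, \cA^0(\O_{K^{\ur}})) = 0$, and hence $H^1(G, A(K^{\ur})) \cong H^1(G, \Phi_{\ssstyle A}(\kbar))$. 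Grothendieck's pairing $\Phi_{\ssstyle A} \times \Phi_{\ssstyle A} \to \Q/\Z$ for the self-dual Jacobian---perfect by Bosch--Lorenzini---together with the finite-module identity $H^1(\hat{\Z}, M) = M_G \cong ((M^{\vee})^G)^{\vee}$, then yields $H^1(G, \Phi_{\ssstyle A}(\kbar)) \cong \Phi_{\ssstyle A}(k)^{\vee}$.

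Finally, Pontryagin-dualizing the short exact sequence from the second paragraph (exact on finite abelian groups) and substituting the two identifications, combined with $\Hom(\Z/d\Z, \Q/\Z) \cong \Z/d\Z$, will yield the stated exact sequence. The most delicate step is the duality in the third paragraph: perfectness of Grothendieck's pairing is classical but nontrivial, available for Jacobians over local fields by Bosch--Lorenzini. A secondary subtlety is the identification $\deg(\Pic(X^{\ur})^G) = \del'\Z$, which rests on carefully distinguishing $\Pic(X)$ from $\mathrm{Pic}_{X/K}(K)$.
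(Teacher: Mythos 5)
Your proposal is correct, and its skeleton coincides with the paper's: both reduce the theorem to the short exact sequence $0 \to \Z/d\Z \to H^1(G, A(K^{\ur})) \to \Br_{\ur}(X)/\Br_{0}(X) \to 0$, where $G=\Gal(K^{\ur}/K)$ (in the paper $H^1(G,A(K^{\ur}))$ appears as $\TT(A/K)$ via inflation--restriction), and then Pontryagin-dualize using the duality $\Phi_{\ssstyle{A}}(k)\isom\Hom(H^1(k,\Phi_{\ssstyle{A}}),\Q/\Z)$ coming from Grothendieck's pairing for the self-dual Jacobian. You differ in how the two inputs are obtained. For $\Br_{\ur}(X)/\Br_{0}(X)\isom H^1(G,\Pic(X^{\ur}))$ the paper applies Milne's five-term sequence over $K$ and $K^{\ur}$ and a snake lemma, while you run Hochschild--Serre for $X^{\ur}/X$ directly; this is legitimate since $H^3(G,K^{\ur,\times})=0$ (write $K^{\ur,\times}\isom\O^{\times}\times\Z$), and it is essentially how Milne's lemma is proved. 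For the $\Z/d\Z$ term the paper uses a second snake lemma comparing the $H^1(K,-)$ and $H^1(K^{\ur},-)$ rows of $0\to A\to P\to\Z\to 0$, while you take $G$-cohomology of the degree sequence over $K^{\ur}$ once, using $H^1(G,\Z)=0$; your flagged subtlety $\deg\big(\Pic(X^{\ur})^{G}\big)=\del'\Z$ is exactly where $\Br(K^{\ur})=0$ enters again, via $\Pic(X^{\ur})=P(K^{\ur})$ and $P(K^{\ur})^{G}=P(K)$. Finally, you re-derive rather than cite the paper's two black boxes: Milne's \emph{ADT} I.3.8 (your Lang-plus-filtration step, where the henselian, non-complete $\O_{K^{\ur}}$ requires the care that reference supplies) and McCallum's perfect pairing (your $H^1(\hat{\Z},M)\isom M_G\isom((M^{\vee})^{G})^{\vee}$ computation, with perfectness of Grothendieck's pairing for Jacobians over the perfect residue field $k$ correctly attributed to Bosch--Lorenzini). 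Net comparison: the paper's version is shorter given its citations and its diagram chases make the maps explicit (exploited in its closing remarks), whereas yours is more self-contained and isolates exactly where $\Br(K^{\ur})=0$, $H^1(G,\Z)=0$, and finiteness of $k$ are used.
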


It follows that

\begin{corollary}\label{mc}
$\Br_{\ur}(X)/\Br_{0}(X)$ is a finite group of order $c_{\ssstyle{A}}/d$.
\end{corollary}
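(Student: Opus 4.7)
The plan is to deduce Corollary~\ref{mc} directly from Theorem~\ref{main} by a short Pontryagin-duality argument; all of the geometric content has already been absorbed into the statement of the Main Theorem.

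First I would observe that $\Phi_{\ssstyle{A}}$ is a finite $k$-group scheme over the finite residue field $k$, so by definition $\Phi_{\ssstyle{A}}(k)$ is a finite group of order $c_{\ssstyle{A}}$. Applied to the short exact sequence
$$0 \to \Hom\big(\Br_{\ur}(X)/\Br_{0}(X),\Q/\Z\big) \to \Phi_{\ssstyle{A}}(k) \to \Z/d\Z \to 0$$
of Theorem~\ref{main}, this forces the kernel $\Hom(\Br_{\ur}(X)/\Br_{0}(X),\Q/\Z)$ to be finite of order exactly $c_{\ssstyle{A}}/d$; in particular $d$ divides $c_{\ssstyle{A}}$.

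The remaining step, and the only real point of substance, is to transfer finiteness from the Pontryagin dual back to the group $G := \Br_{\ur}(X)/\Br_{0}(X)$ itself, together with the equality of orders. For this I would invoke two standard facts: the Brauer group $\Br(X)=H^2(X,\G_{m})$ is torsion, hence so is $G$; and $\Q/\Z$ is an injective cogenerator of the category of abelian groups, so the canonical evaluation map $G \to \Hom(\Hom(G,\Q/\Z),\Q/\Z)$ is injective. The target is the Pontryagin dual of a finite abelian group, hence itself finite of order $c_{\ssstyle{A}}/d$, so $G$ embeds into a group of order $c_{\ssstyle{A}}/d$ and is in particular finite. The equality of orders for a finite abelian group and its Pontryagin dual then gives $|G| = c_{\ssstyle{A}}/d$ as required. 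I anticipate no real obstacle; the only subtlety is the use of the torsion property of $\Br(X)$ to make the double-dual injection work, after which everything is formal.
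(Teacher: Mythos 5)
Your proposal is correct, but it takes a genuinely different route from the paper. The paper treats Corollary \ref{mc} as immediate from the \emph{proof} of Theorem \ref{mt} rather than from its statement: there the undualized exact sequence \eqref{e2},
$$0 \to \Z/d\Z \to \TT(A/K) \to \Br_{\ur}(X)/\Br_{0}(X) \to 0,$$
exhibits $\Br_{\ur}(X)/\Br_{0}(X)$ directly as a quotient of the group $\TT(A/K)$ of Tamagawa torsors, which is finite of order $c_{\ssstyle{A}}$ by Corollary \ref{ttorder}; finiteness and the order count $c_{\ssstyle{A}}/d$ then drop out with no duality argument at all. Working blind from the statement of Theorem \ref{main} alone, you only have the order of the Pontryagin dual $\Hom(G,\Q/\Z)$ with $G=\Br_{\ur}(X)/\Br_{0}(X)$, so your double-dual argument --- evaluation $G\to\Hom(\Hom(G,\Q/\Z),\Q/\Z)$ is injective, the target is finite of order $c_{\ssstyle{A}}/d$, hence $G$ is finite and $\#G=\#\Hom(G,\Q/\Z)$ --- is exactly what is needed, and it is sound; indeed it shows the corollary is a formal consequence of the theorem's statement, which is a small gain in robustness over the paper's reliance on proof internals. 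One inaccuracy worth flagging: the torsion property of $\Br(X)$, which you call the one point of substance, is not actually needed. Injectivity of the evaluation map holds for \emph{every} abelian group, because $\Q/\Z$ is an injective cogenerator: given $g\neq 0$, one defines a nonzero homomorphism on the cyclic subgroup $\langle g\rangle$ (sending $g$ to $\frac{1}{n}+\Z$ if $g$ has order $n$, and to any nonzero element if $g$ has infinite order) and extends to $G$ by injectivity of $\Q/\Z$. So your argument goes through verbatim with that hypothesis deleted.
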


Corollary \ref{mc} has an interesting application. To explain this, consider a \emph{global} field $K$. Thus $K$ is either a finite extension of $\Q$ i.e. a number field, or is finitely generated and of transcendence degree $1$ over a finite field $k$ i.e. a function field. In the number field case, let $U$ denote a nonempty open subscheme of $\Spec{\O_K}$, and in the function field case, let $U$ denote a nonempty open subscheme of the unique, smooth, complete, and irreducible curve $V$ over $k$ whose function field is $K$. Consider a regular, connected scheme $\cX$ of dimension $2$ with a proper morphism $\pi:\cX\to U$ such that its generic fiber $X=\cX\otimes_{U}K$ is a smooth and geometrically connected curve over $K$. Let $S$ be the set of primes of $K$ not corresponding to a point of $U$, and $\overline{K}$ be the separable algebraic closure of $K$. Note that $S$ contains all archimedean primes of $K$ in the number field case, and that it may be empty in the function field case. For each prime $v\not\in{S}$, let $K_v$ denote the completion of $K$ at $v$, and let $X_v=X\otimes_{K}{K_v}$. Let $\del$ and ${\del}'$ be, respectively the index and period of $X$ while $\del_v$ and ${\del_v}'$ be the corresponding quantities associated to $X_v$. It is known that $\del_{v}\neq{1}$ for only finitely many primes $v$, and that either $\del_{v}={\del_{v}}'$ or $\del_{v}=2{\del_{v}}'$ \cite[Theorem 8]{licht:duality}. Let $\Br(\cX)$ denote the Brauer group of $\cX$ and define $\Br(\cX)'$ by the exactness of the sequence
$$0 \to \Br(\cX)' \to \Br(\cX) \to \bigoplus_{v\in{S}}\Br(X_v)$$
Now let $A/K$ be the Jacobian variety of $X$ over $K$, and denote by $\Sha(A/K)$ the \emph{Shafarevich-Tate group} of $A/K$. Generalizing the work of Artin \cite{tate:BSD} and Milne \cite{milne:brauer}, Gonzalez-Aviles has shown \cite{gonzalez:brauer} that

\begin{theorem}[Gonzalez-Aviles]\label{gon}
Suppose that the integers $\del_{v}'$ are pairwise co-prime and that $\Sha(A/K)$ contains no nonzero infinitely divisible elements. Then there is an exact sequence 
$$0 \to T_0 \to T_1 \to \Br(\cX)' \to \Sha(A/K)/T_2 \to T_3 \to 0$$ 
in which $T_0$, $T_1$, $T_2$ and $T_3$ are finite groups of orders
\begin{align*}
\#{T_0} &={\del}/{\del}'\\
\#{T_1} &={2}^{e}\\
\#{T_2} &={\del}'/{\prod{\del_{v}'}}\\
\#{T_3} &=\frac{{\del}'/{\prod{\del_{v}'}}}{2^f}
\end{align*}
where 
$$e=\max (0,d'-1)$$
and
\[
f=
\begin{cases}
1 &\text{if } {\del}'/{\prod{{\del_{v}}'}} \text{is even and } d'\geq 1\\
0 &\text{otherwise}
\end{cases}
\]
Here $d'$ is the number of primes $v$ for which $\del_{v}=2{\del_{v}}'$. In particular, if one of $\Sha(A/K)$ or $\Br(\cX)'$ is finite, then so is the other, and their orders are related by 
$${\del}{\del}'\;\#\Br(\cX)'\;=\;2^{e+f}\;\prod_{v}{({\del_v}')^2}\;\#{\Sha(A/K)}$$
\end{theorem}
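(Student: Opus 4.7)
The plan is to adapt the Artin--Tate--Milne framework to the present setting: combine the Leray spectral sequence for $\pi:\cX\to U$ in the étale topology with coefficients in $\G_m$, global arithmetic duality for the Jacobian $A$, and the local input furnished by Theorem~\ref{main} (via Corollary~\ref{mc}) at each prime $v\notin S$.

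First I would set up the spectral sequence
$$E_2^{p,q}=H^p(U,R^q\pi_*\G_m)\;\Longrightarrow\;H^{p+q}(\cX,\G_m).$$
Since $\pi$ is proper with geometrically connected fibres, $\pi_*\G_m=\G_m$ on $U$, while $R^1\pi_*\G_m$ is representable (as an étale sheaf) by the relative Picard scheme $\Pic_{\cX/U}$; via the Néron mapping property, $\Pic^0_{\cX/U}$ is closely related to the Néron model $\cA$ of $A$. The low-degree piece of the spectral sequence, combined with the short exact sequence $0\to\cA^0\to\cA\to\Phi\to 0$ cutting out the sheaf of component groups, produces a long sequence relating $\Br(\cX)$ to $H^2(U,\cA)$, to the degree map $\deg:\Pic_{\cX/U}\to\Z$ (whose cokernel over $K$ is controlled by $\del'$), and to local contributions from the component groups at primes $v\notin S$.

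Next I would invoke Poitou--Tate duality on $U$ for the smooth commutative group scheme $\cA$: the Cassels--Tate pairing, together with the hypothesis that $\Sha(A/K)$ contains no nonzero infinitely divisible elements, identifies the relevant piece of $H^2(U,\cA)$ with $\Sha(A/K)$ modulo a divisible subgroup that accordingly vanishes. The local defect between cohomology on $U$ and cohomology over $K$ is controlled at each $v$ by the Tamagawa component group $\Phi_{A_v}(k_v)$, and Corollary~\ref{mc} rewrites each such factor as the product of $\#\bigl(\Br_{\ur}(X_v)/\Br_0(X_v)\bigr)$ and the local period/unramified-period ratio $d_v$. These local Brauer contributions are precisely the pieces that, by the definition of $\Br(\cX)'$, cancel upon globalising; what remains are the index/period obstructions, assembled into the four-term exact sequence
$$0\to T_0\to T_1\to\Br(\cX)'\to\Sha(A/K)/T_2\to T_3\to 0.$$

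The hard part is the bookkeeping of the index/period and $2$-primary defects. The factor $T_0$ of order $\del/\del'$ measures the failure of $X$ to carry a $K$-rational divisor of degree $1$; the factor of order $\del'/\prod\del_v'$ quantifies the Hasse-principle failure for such classes, made sharp by the coprimality hypothesis on the $\del_v'$. The $2$-power factors $2^e$ and $2^f$ arise from Artin--Tate style corrections concentrated at primes $v$ with $\del_v=2\del_v'$, and reflect whether the autoduality of the Jacobian is induced by a $K$-rational theta divisor versus only up to a half-period ambiguity. Multiplying orders along the exact sequence then yields the product formula
$$\del\,\del'\cdot\#\Br(\cX)'\;=\;2^{e+f}\prod_v(\del_v')^2\cdot\#\Sha(A/K),$$
and nondegeneracy of the Cassels--Tate pairing modulo divisibles ensures that finiteness of $\Sha(A/K)$ and of $\Br(\cX)'$ are equivalent.
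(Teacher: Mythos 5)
You should first note that the paper does not prove Theorem \ref{gon} at all: it is quoted verbatim from Gonzalez-Aviles \cite{gonzalez:brauer}, so there is no in-paper argument to match, and your sketch must be judged against that source. Judged so, it has a genuine gap, and the most concrete one is your use of Corollary \ref{mc} as local input. In the paper's logic the flow is the reverse (Theorem \ref{main} is \emph{combined with} Theorem \ref{gon} to get Corollary 1.4); that by itself is not circular, since Theorem \ref{main} is purely local, but it signals that the mechanism is off: no Tamagawa numbers and no groups $\Br_{\ur}(X_v)/\Br_{0}(X_v)$ appear anywhere in the statement of Theorem \ref{gon}, so your plan requires these factors to cancel exactly, and your stated reason --- that they ``cancel upon globalising by the definition of $\Br(\cX)'$'' --- is false. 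The group $\Br(\cX)'$ is cut out by killing restrictions to the fibers over $v\in S$ only, whereas the local factors you introduce via Corollary \ref{mc} live at the primes $v\notin S$; nothing in the definition touches them. The way component-group contributions actually drop out of such comparisons is through the regularity and properness of $\cX\to U$ and duality over the local rings of integers, not by a definitional cancellation.

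The second gap is that the orders of $T_0,\dots,T_3$ --- which are the entire content of the theorem --- are asserted rather than derived. Gonzalez-Aviles's argument is generic-fiber based rather than spectral-sequence based: one splices the sequence \ref{pb} over $K$ and over each $K_v$ with the fundamental exact sequence of class field theory $0\to\Br(K)\to\bigoplus_v\Br(K_v)\to\Q/\Z\to 0$ to compare $\Br(\cX)'$ with the Tate--Shafarevich group of the Picard scheme $P$, and then uses $0\to A\to P\to\Z\to 0$ to compare the latter with $\Sha(A/K)$; the factors $\del/\del'$ and $\del'/\prod\del_v'$ appear as (co)kernels of the global versus local degree maps, with the coprimality of the $\del_v'$ needed to split $T_2$ off as a direct factor. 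The powers of $2$ come from Lichtenbaum's local duality theorems \cite{licht:duality}: the invariant of the period--index obstruction class is $\tfrac12$ precisely when $\del_v=2\del_v'$, and $e=\max(0,d'-1)$ records the kernel of the sum-of-invariants map on the $d'$ such classes. Your attribution of the $2$-powers to a theta-divisor/self-duality ambiguity is the Poonen--Stoll phenomenon, which is a related but different issue and is not the operative mechanism here. Finally, the hypothesis that $\Sha(A/K)$ has no nonzero infinitely divisible elements is used because the Brauer-group comparison only sees $\Sha$ modulo its divisible part --- your sketch gestures at this correctly, but without the degree-map and invariant-map bookkeeping the four-term sequence and the product formula do not follow.
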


Now let $K_v^{\ur}$ be the maximal unramified extension of $K_v$, and $X_{v}^{\ur}=X_{v}\otimes_{K_v}K_v^{\ur}$ be the fiber over $K_v^{\ur}$, with index $\del_{v}^{\ur}$ and period ${\del_{v}^{\ur}}'$. Let $\Br_{0}(X_v)$ be the image of the map $\Br(K_v)\to\Br(X_v)$, and $\Br_{\ur}(X_v)$ be the kernel of the map $\Br(X_v)\to\Br(X_v^{\ur})$. Let $c_{\ssstyle{A,v}}$ the Tamagawa number of $A$ at $v$. Note that Theorem \ref{main} applies to $X_{v}$. Combining Corollary \ref{mc} with Theorem \ref{gon}, we obtain

\begin{corollary}
Suppose that the integers $\del_{v}'$ are pairwise co-prime, and $\Sha(A/K)$ is finite. Then
$$\#{\Sha(A/K)}\;\prod_{v}{c_{\ssstyle{A,v}}}\;=\;M\;\#\Br(\cX)'\;\prod_{v}{\#\big(\Br_{\ur}(X_v)/\Br_{0}(X_{v})\big)}$$
where $M$ is a rational number given by
$$M=\frac{{\del}{\del'}}{2^{e+f}\prod_{v}{{\del_v'}{\del_{v}^{\ur}}'}}$$
\end{corollary}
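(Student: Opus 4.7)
The plan is to combine Corollary~\ref{mc}, applied at each completion $K_{v}$, with the formula in Theorem~\ref{gon} (Gonzalez-Aviles) by a direct computation. There is no genuinely new ingredient; the work is bookkeeping with indices, periods, and $2$-power factors.

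First, I would invoke Corollary~\ref{mc} at each prime $v \notin S$: since $X_v$ is a proper, smooth, geometrically connected curve over the non-archimedean local field $K_v$, the corollary gives
$$\#\bigl(\Br_{\ur}(X_v)/\Br_{0}(X_v)\bigr) \;=\; \frac{c_{\ssstyle{A,v}}}{d_v}, \qquad d_v = \frac{\del_v'}{{\del_v^{\ur}}'}.$$
Taking the product over $v$ (noting that $c_{\ssstyle{A,v}}=1$ and $d_v=1$ for all but finitely many $v$, so the product is finite), I obtain
$$\prod_{v} \#\bigl(\Br_{\ur}(X_v)/\Br_{0}(X_v)\bigr) \;=\; \frac{\prod_{v} c_{\ssstyle{A,v}}}{\prod_{v}\bigl(\del_v'/{\del_v^{\ur}}'\bigr)}.$$

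Second, I would use the product formula from Theorem~\ref{gon}, valid precisely under the hypotheses that the $\del_v'$ are pairwise coprime and $\Sha(A/K)$ is finite:
$$\del\,\del'\,\#\Br(\cX)' \;=\; 2^{e+f}\,\prod_{v}({\del_v}')^{2}\,\#\Sha(A/K).$$
Solving for $\#\Sha(A/K)$, multiplying both sides by $\prod_{v} c_{\ssstyle{A,v}}$, and substituting the expression for $\prod_{v} c_{\ssstyle{A,v}}$ obtained above yields
$$\#\Sha(A/K)\,\prod_{v} c_{\ssstyle{A,v}} \;=\; \frac{\del\,\del'}{2^{e+f}\,\prod_{v}{\del_v'}\,{\del_v^{\ur}}'}\;\#\Br(\cX)'\;\prod_{v}\#\bigl(\Br_{\ur}(X_v)/\Br_{0}(X_v)\bigr),$$
which is exactly the asserted identity with $M$ as claimed.

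The only subtlety to check is that the various finiteness and well-definedness assertions line up: the hypothesis that $\Sha(A/K)$ is finite is what lets us use the product form of Theorem~\ref{gon}; the fact that $\del_v=1={\del_v}'$ for all but finitely many $v$ (so that almost all local factors trivialize) ensures the infinite product is actually finite; and the hypothesis that the $\del_v'$ are pairwise coprime is exactly what is required to invoke Theorem~\ref{gon}. There is no real obstacle — the statement is a formal consequence of the main theorem and the Gonzalez-Aviles formula, and the only risk is a miscount of $2$-powers or of the $\del_v'/{\del_v^{\ur}}'$ factors, which the explicit form of $M$ pins down.
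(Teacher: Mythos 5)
Your proposal is correct and takes essentially the same route as the paper, which gives no separate proof but simply states the corollary is obtained by ``combining Corollary~\ref{mc} with Theorem~\ref{gon}'' --- exactly the bookkeeping you carry out, with the local factorization $c_{\ssstyle{A,v}} = (\del_v'/{\del_v^{\ur}}')\cdot\#\bigl(\Br_{\ur}(X_v)/\Br_{0}(X_v)\bigr)$ substituted into the Gonzalez-Aviles product formula. Your arithmetic checks out, and your remarks on finiteness of the products and on finiteness of $\Sha(A/K)$ licensing the ``in particular'' clause of Theorem~\ref{gon} are the right points to flag.
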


Of course, the left-hand term in the above formula appears in the statement of the well-known Birch and Swinnerton-Dyer Conjecture.

\begin{remark}
The hypothesis that the integers $\del_{v}'$ are pairwise coprime in Theorem \ref{gon} can be dropped when $K$ is a function field, and $S=\emptyset$. More precisely, assume that the curve $V/k$ introduced above is also geometrically connected, and consider $\cX$ to be a smooth, proper, and geometrically connected surface endowed with a proper and flat morphism $f:\cX\to V$ whose generic fiber is $X\to\Spec{K}$. In this case, it is shown in \cite[Cor. 3]{llr2} that if, for some prime $l$, the $l$-part of the group $\Br(\cX)$ or of the group $\Sha(A/K)$ is finite, then
$${\del}^{2}\;\#\Br(\cX)\;=\;\prod_{v}{\del_{v}}{\del_{v}}'\;\#\Sha(A/K)$$ and $\#\Br(\cX)$ is a square. It follows that, in this case, we get
$$\#{\Sha(A/K)}\;\prod_{v}{c_{\ssstyle{A,v}}}\;=\;N\;\#\Br(\cX)\;\prod_{v}{\#\big(\Br_{\ur}(X_v)/\Br_{0}(X_{v})\big)}$$
where the rational number $N$ is given by
$$N=\frac{{\del}^2}{\prod_{v}{{\del_v}{\del_{v}^{\ur}}'}}$$
\end{remark}

\section*{Acknowledgements}
I thank the referee whose careful comments and detailed suggestions have greatly enhanced the content as well as the presentation of this paper. I also thank Dino Lorenzini for many helpful conversations.

\section{Preliminaries}
\subsection{Component Groups, Tamagawa numbers}
Let $K$ be a complete, discretely valued field with finite residue field $k$. Let $A/K$ be an abelian variety over $K$, and let $\cA$ be the \emph{N{\'e}ron model} \cite{blr} of $A/K$ over $\Spec{\O_K}$. The closed fiber $\cA_{k}$ of $\cA$ is a $k$-group scheme, not necessarily connected. Let $\cA_{k}^{0}$ be the connected component of $\cA_{k}$ containing the identity. Over $\Spec{k}$, there is an exact sequence of group schemes 
\begin{equation}\nonumber
0 \to \cA_{k}^0 \to \cA_{k} \to \Phi_{\ssstyle{A}} \to 0
\end{equation}
where the quotient $\Phi_{\ssstyle{A}}$ is a finite, {\'e}tale group scheme over $k$. Equivalently, $\Phi_{\ssstyle{A}}$ is a finite abelian group with a continuous action of $\Gal({\kbar}/{k})$ on it. The group scheme $\Phi_{\ssstyle{A}}={\cA_{k}}/{\cA_{k}^{0}}$
is called the \emph{component group} of $A$. The group of rational points $\Phi_{\ssstyle{A}}(k)$, called the \emph{arithmetic component group} of $A$, counts the number of connected components of $\cA_{k}$ which are geometrically connected and $c_{\ssstyle{A}}=\#{\Phi_{\ssstyle{A}}(k)}$ is called the \emph{Tamagawa number} of $A/K$. Now let $K^{\ur}$ be the maximal unramified extension of $K$. The inclusion $\Gal({\overline{K}}/K^{\ur})\subset\Gal({\overline{K}}/K)$
induces a map $H^1(K,A) \to H^1(K^{\ur},A)$, whose kernel corresponds to the \emph{unramified} subgroup of $H^1(K,A)$.
The map may also be given as $WC(A/K) \to WC(A/K^{\ur})$ where, $WC(A/K)\isom H^1(K,A)$ denotes the Weil-Ch{\^a}telet group of $A$ over $K$. We denote this kernel by $\TT(A/K)$ and call it the group of \emph{Tamagawa torsors} of $A$ over $K$ \cite{biswas}.

\begin{theorem}\label{tt}
There exists a canonical isomorphism of finite abelian groups
$$\TT(A/K)\isom H^1(k,\Phi_{\ssstyle{A}})$$
\end{theorem}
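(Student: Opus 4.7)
My plan is to compute $\TT(A/K)$ by combining inflation--restriction for the unramified tower $K^{\ur}/K$ with the component-group exact sequence of the N\'eron model, reducing the problem to a vanishing statement over the strictly Henselian base $\O_{K^{\ur}}$.

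First, set $G = \Gal(K^{\ur}/K)$, which is canonically identified with $\Gal(\kbar/k)\isom\hat{\Z}$. The five-term Hochschild--Serre sequence for the tower $\Kbar\supset K^{\ur}\supset K$ reads
$$0 \to H^1(G, A(K^{\ur})) \to H^1(K, A) \to H^1(K^{\ur}, A)^{G},$$
so by definition of $\TT(A/K)$ as the kernel of restriction, I obtain $\TT(A/K)\isom H^1(G, A(K^{\ur}))$. The N\'eron mapping property applied over the Henselian base $\O_{K^{\ur}}$ yields $A(K^{\ur}) = \cA(\O_{K^{\ur}})$, reducing the task to computing $H^1(G, \cA(\O_{K^{\ur}}))$.

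Next, the component-group exact sequence $0\to\cA^{0}\to\cA\to i_{*}\Phi_{\ssstyle{A}}\to 0$ of \'etale sheaves on $\Spec{\O_K}$ pulls back to $\Spec{\O_{K^{\ur}}}$. Taking sections and using that smoothness of $\cA$ together with Hensel's lemma makes the reduction $\cA(\O_{K^{\ur}})\to\cA(\kbar)$ surjective, I obtain
$$0 \to \cA^{0}(\O_{K^{\ur}}) \to \cA(\O_{K^{\ur}}) \to \Phi_{\ssstyle{A}}(\kbar) \to 0.$$
Feeding this into the long exact sequence in $G$-cohomology, the theorem reduces to showing that $\cA^{0}(\O_{K^{\ur}})$ has vanishing $G$-cohomology in degrees $1$ and $2$, after which the resulting isomorphism $H^1(G, \cA(\O_{K^{\ur}}))\isom H^1(G, \Phi_{\ssstyle{A}}(\kbar)) = H^1(k, \Phi_{\ssstyle{A}})$ is exactly the claimed identification.

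The main obstacle is this vanishing, which I plan to attack by filtering
$$0 \to \cA^{1}(\O_{K^{\ur}}) \to \cA^{0}(\O_{K^{\ur}}) \to \cA^{0}_{\kbar}(\kbar) \to 0,$$
where $\cA^{1}$ denotes the kernel of reduction, and treating the two pieces separately. For the quotient, Lang's theorem applied to the smooth connected group $\cA^{0}_{\kbar}$ (and its Frobenius twists) gives $H^{i}(G, \cA^{0}_{\kbar}(\kbar)) = 0$ for $i \geq 1$. For the formal part $\cA^{1}(\O_{K^{\ur}})$, I would filter further by the subgroups coming from the powers $\m^{n}_{K^{\ur}}$ of the maximal ideal; the successive quotients are finite-dimensional $\kbar$-vector spaces whose higher $G$-cohomology vanishes by the additive form of Hilbert 90, and completeness of the filtration together with a Mittag--Leffler argument promotes this to the vanishing on $\cA^{1}(\O_{K^{\ur}})$, completing the proof.
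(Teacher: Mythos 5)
Your proposal is correct in outline, but it takes a genuinely different route from the paper at the decisive step. After the inflation--restriction identification $\TT(A/K)\isom H^1(K^{\ur}/K,A(K^{\ur}))$ --- which is exactly the paper's first step --- the paper simply cites Milne, \emph{Arithmetic Duality Theorems}, Prop.\ I.3.8 for the isomorphism $H^1(K^{\ur}/K,A(K^{\ur}))\isom H^1(k,\Phi_{\ssstyle{A}})$ and stops. You instead prove that cited proposition from scratch: the N{\'e}ron property gives $A(K^{\ur})=\cA(\O_{K^{\ur}})$, the component sequence plus smoothness and Hensel's lemma gives the surjection onto $\Phi_{\ssstyle{A}}(\kbar)$, and the problem reduces to cohomological triviality of $\cA^{0}(\O_{K^{\ur}})$, which you attack via Lang's theorem on the special fiber and the congruence filtration on the kernel of reduction. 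This is essentially the standard argument underlying Milne's proposition, so what the paper buys with the citation is brevity, while your version is self-contained and makes visible exactly where smoothness, connectedness of $\cA^{0}_{\kbar}$, and finiteness of $k$ enter.

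Two repairs are needed, both routine. First, $\O_{K^{\ur}}$ is Henselian but \emph{not} $\m$-adically complete ($\varprojlim \O_{K^{\ur}}/\m^{n}$ is the ring of integers of the completion of $K^{\ur}$), so the congruence filtration on $\cA^{1}(\O_{K^{\ur}})$ is separated but not complete, and your ``completeness of the filtration together with a Mittag--Leffler argument'' cannot be run at the infinite level as stated. The fix is to use that $G\isom\hat{\Z}$ is profinite and its cohomology is the colimit over finite unramified subextensions $L/K$: run the successive-approximation argument for $\Gamma=\Gal(L/K)$ acting on $\cA^{1}(\O_L)$, where $\O_L$ \emph{is} complete, and pass to the colimit. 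Second, Lang's theorem by itself only kills $H^{1}$; for the needed vanishing of $H^{2}(G,\cA^{0}(\O_{K^{\ur}}))$ you should either note that at each finite level the Tate cohomology of the cyclic group $\Gamma$ with coefficients in $\cA^{0}_{\kbar}(k_L)$ vanishes in all degrees (norm surjectivity again follows from connectedness, \`a la Lang), or that $\cA^{0}_{\kbar}(\kbar)$ is a torsion module and $\hat{\Z}$ has cohomological dimension $1$ on torsion modules; your parenthetical about Frobenius twists gestures at this but should be made precise. With these adjustments your argument is complete and correctly yields the finiteness statement as well, since $\Phi_{\ssstyle{A}}$ is finite.
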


\begin{proof}
The inflation-restriction sequence
$$0 \lra H^1(K^{\ur}/K,A(K^{\ur})) \lra H^1(K,A) \lra H^1(K^{\ur},A)$$
identifies the set of Tamagawa torsors with the injective image of the group $H^1(K^{\ur}/K,A(K^{\ur}))$ in $H^1(K,A)$. 
There is an isomorphism \cite[\S Prop I.3.8]{milne:duality}
$H^1(K^{\ur}/K,A(K^{\ur}))\isom H^1(k,\Phi_{\ssstyle{A}})$
and the latter group is finite, since $\Phi_{\ssstyle{A}}$ is finite.
\end{proof}

\begin{corollary}\label{ttorder}
Suppose that $A/K$ is a Jacobian variety. Then there exists a canonical perfect pairing of finite abelian groups
$$\TT(A/K) \times \Phi_{A}(k) \lra \Q/\Z$$
In particular, $\TT(A/K)$ has order $c_{\ssstyle{A}}$.
\end{corollary}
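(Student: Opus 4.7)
The plan is to combine Theorem~\ref{tt}, which provides $\TT(A/K) \isom H^1(k, \Phi_{\ssstyle{A}})$, with the fact that for a Jacobian the component group $\Phi_{\ssstyle{A}}$ is canonically self-dual as a $\Gal(\kbar/k)$-module, and then apply finite-field Galois duality.

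First I would invoke Grothendieck's pairing on component groups: for any abelian variety $A/K$ there is a canonical Galois-equivariant pairing
\[
\Phi_{\ssstyle{A}} \times \Phi_{\ssstyle{A^{\vee}}} \lra \Q/\Z,
\]
which is known to be \emph{perfect} when $A$ is a Jacobian (by a theorem of Bosch and Lorenzini). Moreover, for a Jacobian the canonical principal polarization $\lambda : A \isom A^{\vee}$ is an isomorphism of abelian varieties over $K$; by the N{\'e}ron mapping property it extends to an isomorphism $\cA \isom \cAdual$, inducing an isomorphism $\lambda_{*} : \Phi_{\ssstyle{A}} \isom \Phi_{\ssstyle{A^{\vee}}}$ of $k$-group schemes. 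Precomposing Grothendieck's pairing with $\lambda_{*}$ on the second factor then yields a canonical perfect pairing $\Phi_{\ssstyle{A}} \times \Phi_{\ssstyle{A}} \to \Q/\Z$ of Galois modules, equivalently a canonical isomorphism $\Phi_{\ssstyle{A}} \isom \Hom(\Phi_{\ssstyle{A}},\Q/\Z)$.

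Next, because $\Gal(\kbar/k) \isom \widehat{\Z}$ has cohomological dimension one on torsion modules, the cup product furnishes, for every finite Galois module $M$, a canonical perfect pairing
\[
H^0(k, M) \times H^1(k, \Hom(M,\Q/\Z)) \lra H^1(k, \Q/\Z) = \Q/\Z .
\]
Applying this with $M = \Phi_{\ssstyle{A}}$ and using the self-duality established above to rewrite the second factor as $H^1(k,\Phi_{\ssstyle{A}})$, I obtain a canonical perfect pairing $\Phi_{\ssstyle{A}}(k) \times H^1(k, \Phi_{\ssstyle{A}}) \to \Q/\Z$. Identifying $H^1(k, \Phi_{\ssstyle{A}}) \isom \TT(A/K)$ via Theorem~\ref{tt} produces the required perfect pairing, and perfectness of a pairing between finite abelian groups then forces $\#\TT(A/K) = \#\Phi_{\ssstyle{A}}(k) = c_{\ssstyle{A}}$.

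The main subtle ingredient is the perfectness of Grothendieck's pairing: this is exactly where the hypothesis that $A$ is a Jacobian is essential, since for general abelian varieties perfectness is conjectural and even known to fail in some pathological residue-field settings. Everything else---finite-field local duality, autoduality of the Jacobian via the canonical principal polarization, and N{\'e}ron functoriality---is formal.
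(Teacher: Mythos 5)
Your proposal is correct and takes essentially the same route as the paper: the paper's one-line proof combines Theorem~\ref{tt}, the self-duality of the Jacobian, and the perfectness of McCallum's pairing \cite[(4.5)]{mcc} induced by Grothendieck's pairing \cite[(2.1)]{mcc}, and your argument simply unpacks those ingredients explicitly (the principal polarization identifying $\Phi_{\ssstyle{A}}$ with $\Phi_{\ssstyle{A^{\vee}}}$ via the N\'eron mapping property, and finite-field cup-product duality converting the perfect pairing on component groups into the pairing $\Phi_{\ssstyle{A}}(k)\times H^1(k,\Phi_{\ssstyle{A}})\to\Q/\Z$). The only substantive difference is bibliographic: you invoke Bosch--Lorenzini for perfectness of Grothendieck's pairing for Jacobians, which has the advantage of covering the equal-characteristic local fields permitted by the paper's hypotheses, whereas the paper cites McCallum.
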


\begin{proof}
This follows from Theorem \ref{tt}, the fact that $A$ is a self-dual abelian variety, and the perfectness of the pairing \cite[(4.5)]{mcc} induced by Grothendieck's pairing \cite[(2.1)]{mcc}.
\end{proof}

Thus, for any Jacobian variety $A/K$, there is an isomorphism 
\begin{equation}\label{ttdual}
\Phi_{\ssstyle{A}}(k)\isom \Hom(\TT(A/K),\Q/\Z)
\end{equation}
of finite, abelian groups.

\begin{remark}
Since $\Phi_{\ssstyle{A}}$ is finite, its Herbrand quotient is $1$ which implies that 
$\#H^1(k,\Phi_{\ssstyle{A}})=\#H^0(k,\Phi_{\ssstyle{A}})=c_{\ssstyle{A}}$. Thus, for any abelian variety $A/K$ (and not just Jacobians), it follows from Theorem \ref{tt} that $\#\TT(A/K)=c_{\ssstyle{A}}$.
\end{remark}

\subsection{Picard Groups, Jacobian Varieties, and Brauer Groups}
Let $X$ be a smooth, projective, geometrically connected curve defined over any field $K$. Let $\overline{K}$ be a separable closure of $K$, and let $\overline{X}=X\tensor_{K}\overline{K}$. Let $\Div(\overline{X})$ be the group of divisors of $\overline{X}$ i.e. the free abelian group generated by the points of $X(\overline{K})$. Note that ${\Div(\overline{X})}^{G_K}=\Div(X)$, where $G_K=\Gal(\overline{K}/K)$. There is a natural summation map $\Div(X_K)\to\Z$ whose image is $\del\Z$, where $\del$ is the \emph{index} of $X$. Equivalently, $\del$ is the least positive degree of a divisor in $\Div(X_K)$.
 Let $P={\Pic}_{X}$ be the \emph{Picard scheme} of $X$ so that $P(\overline{K})=\Pic(\overline{X})$. It follows that $P(K)={P(\overline{K})}^{G_{K}}={\Pic(\overline{X})}^{G_K}$. The Picard scheme is a smooth group scheme over $K$ whose identity component $A=\Pic^{0}_{X}$ is called the \emph{Jacobian variety} of $X$. There is an exact sequence of $G_K$-modules
\begin{equation}\label{e0}
0 \to A(\overline{K}) \to P(\overline{K}) \xrightarrow{\textrm{deg}} \Z \to 0
\end{equation}
where deg is the degree map on $\Pic(\overline{X})$. Taking $G_K$-invariants of \ref{e0}, we obtain the exact sequence
\begin{equation}
0 \to A(K) \to P(K) \xrightarrow{\textrm{deg}} {\del}'\Z \to 0
\end{equation}
where ${\del}'$ is the \emph{period} of $X$. Equivalently, $\del'$ is the least positive degree of a divisor class in $P(K)={\Pic(\overline{X})}^{G_K}$. The image of the map $\Div(X)\to P(K)$ is denoted by $\Pic(X)$. Furthermore, it is known that $\Pic(X)=H^1(X,\G_m)$. Let $\Br(X)=H^2(X,\G_m)$ be the Brauer-Grothendieck group of $X$. By Lemma 2.2 in \cite{milne:brauer}, there is an exact sequence
\begin{equation}\label{pb}
0 \to \Pic(X) \to P(K) \to \Br(K) \to \Br(X) \to H^1(K,P) \to 0
\end{equation}
The zero on the right-hand end follows from \cite[Cor.I.4.21]{milne:duality}.

\section{The Main Theorem}
In this section, we prove

\begin{theorem}\label{mt}
Let $X$ be a proper, smooth, geometrically connected curve over a non-archimedean local field $K$ having finite residue field $k$, with index $\del$ and period $\del'$. Let $X^{\ur}=X\otimes_{K}{K^{\ur}}$ be the corresponding curve over $K^{\ur}$, with index $\del^{\ur}$ and period ${\del^{\ur}}'$. Let $\Br_{0}(X)$ denote the image of $\Br(K)\to\Br(X)$, and $\Br_{\ur}(X)$ denote the kernel of the map $\Br(X)\to\Br(X^{\ur})$. Let $A$ be the Jacobian variety of $X$ over $K$. Then there exists an exact sequence 
$$0 \to \Hom\big(\Br_{\ur}(X)/\Br_{0}(X),\Q/\Z\big) \to \Phi_{A}(k) \to \Z/d\Z \to 0$$
where $\Phi_{\ssstyle{A}}$ is the component group of $A$, and $d={\del}'/{\del^{\ur}}'$.
\end{theorem}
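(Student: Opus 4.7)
The plan is to combine the Pontryagin-dual reinterpretation \eqref{ttdual} of $\Phi_{\ssstyle{A}}(k)$ with a snake-lemma computation of $\TT(A/K)$ derived from the degree sequence \eqref{e0}. Taking $G_K$- and $G_{K^{\ur}}$-cohomology of \eqref{e0} and using that $H^1(G,\Z)=0$ for any profinite group $G$ (continuous homomorphisms from a compact group to a discrete torsion-free group vanish), one obtains the two short exact sequences
\begin{equation*}
0 \to \Z/\del'\Z \to H^1(K,A) \to H^1(K,P) \to 0,
\end{equation*}
\begin{equation*}
0 \to \Z/{\del^{\ur}}'\Z \to H^1(K^{\ur},A) \to H^1(K^{\ur},P) \to 0,
\end{equation*}
connected by the restriction maps. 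Since ${\del^{\ur}}'\mid\del'$, the leftmost vertical map is the canonical surjection $\Z/\del'\Z\twoheadrightarrow\Z/{\del^{\ur}}'\Z$ with kernel $\Z/d\Z$, so the snake lemma yields
\begin{equation*}
0\to\Z/d\Z\to\TT(A/K)\to\ker\bigl(H^1(K,P)\to H^1(K^{\ur},P)\bigr)\to 0.
\end{equation*}

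The next step is to identify the rightmost kernel with $\Br_{\ur}(X)/\Br_{0}(X)$. Applying the Picard--Brauer sequence \eqref{pb} over both $K$ and $K^{\ur}$, together with the classical fact $\Br(K^{\ur})=0$, yields canonical isomorphisms $H^1(K,P)\cong\Br(X)/\Br_{0}(X)$ and $H^1(K^{\ur},P)\cong\Br(X^{\ur})$, compatible with restriction. Because $\Br(K^{\ur})=0$, any element of $\Br_{0}(X)$ pulls back to $0$ in $\Br(X^{\ur})$, so $\Br_{0}(X)\subset\Br_{\ur}(X)$ and the induced map $\Br(X)/\Br_{0}(X)\to\Br(X^{\ur})$ has kernel precisely $\Br_{\ur}(X)/\Br_{0}(X)$.

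Combining these identifications gives a short exact sequence
\begin{equation*}
0\to\Z/d\Z\to\TT(A/K)\to\Br_{\ur}(X)/\Br_{0}(X)\to 0
\end{equation*}
of finite abelian groups (the middle term has order $c_{\ssstyle A}$ by the remark after Corollary~\ref{ttorder}, forcing the outer terms to be finite as well). Dualizing by the exact functor $\Hom(-,\Q/\Z)$, invoking the isomorphism $\Phi_{\ssstyle{A}}(k)\cong\Hom(\TT(A/K),\Q/\Z)$ of \eqref{ttdual}, and using $\Hom(\Z/d\Z,\Q/\Z)\cong\Z/d\Z$, then produces the claimed sequence. The main obstacle, which will require the most care, is verifying that the identification from \eqref{pb} is functorial with respect to the base change $K\hookrightarrow K^{\ur}$, so that the snake-lemma kernel genuinely matches the Brauer-theoretic kernel; this amounts to a diagram chase using the functoriality of the Leray spectral sequence underlying \eqref{pb}, together with the vanishing $\Br(K^{\ur})=0$.
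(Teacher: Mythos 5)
Your proposal is correct and follows essentially the same route as the paper's proof: the same commutative diagram obtained from $0\to A\to P\to\Z\to 0$ over $K$ and $K^{\ur}$ (using $H^1(G,\Z)=0$ and the images $\del'\Z$, ${\del^{\ur}}'\Z$), the same snake-lemma extraction of $0\to\Z/d\Z\to\TT(A/K)\to\ker\bigl(H^1(K,P)\to H^1(K^{\ur},P)\bigr)\to 0$, the same identification of that kernel with $\Br_{\ur}(X)/\Br_{0}(X)$ via the sequence \eqref{pb} and $\Br(K^{\ur})=0$, and the same Pontryagin dualization using \eqref{ttdual}. The only differences are cosmetic --- the paper performs the Brauer-theoretic identification by a second snake lemma and names the kernel $H^1(K^{\ur}/K,P(K^{\ur}))$ via inflation-restriction, whereas you work with the quotient isomorphisms directly, and the functoriality of \eqref{pb} that you flag as the delicate point is exactly what the paper's second commutative diagram silently asserts.
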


\begin{proof}
The short exact sequence
$$0 \to A \to P \to \Z \to 0$$
over $K$ and $K^{\ur}$ gives rise, respectively, to the exact rows of the commutative diagram
\[
\xymatrix{
P(K)\ar[r]\ar[d] &\Z\ar[r]\ar[d] &H^1(K,A)\ar[r]\ar[d] &H^1(K,P)\ar[r]\ar[d] &0\\
P(K^{\ur})\ar[r] &\Z\ar[r] &H^1(K^{\ur},A)\ar[r] &H^1(K^{\ur},P)\ar[r] &0}
\]
The colums in the diagram are induced by the inclusion $K\subset{K^{\ur}}$. The image of the map $P(K)\to\Z$ is, by definition, ${\del}'\Z$ while that of $P(K^{\ur})\to\Z$ is ${\del^{\ur}}'\Z$. Thus we have the diagram
\[
\xymatrix{
0\ar[r] &\Z/{\del}'\Z\ar[r]\ar[d] &H^1(K,A)\ar[r]\ar[d] &H^1(K,P)\ar[r]\ar[d] &0\\
0\ar[r] &\Z/{\del^{\ur}}'\Z\ar[r] &H^1(K^{\ur},A)\ar[r] &H^1(K^{\ur},P)\ar[r] &0}
\]
Since ${\del^{\ur}}'$ divides ${\del}'$, the leftmost vertical map is surjective. The kernel of this map is $\Z/d\Z$ where $d={\del}'/{\del^{\ur}}'$ . The middle vertical map has kernel $H^1(K^{\ur}/K,A(K^{\ur}))\isom\TT(A/K)$ by Theorem \ref{tt}. The kernel of the rightmost vertical map is $H^1(K^{\ur}/K,P(K^{\ur}))$. Snake Lemma then gives an exact sequence
\begin{equation}\label{e1}
0 \to \Z/d\Z \to \TT(A/K) \to H^1(K^{\ur}/K,P(K^{\ur})) \to 0
\end{equation}
We now describe the right-most term in the exact sequence \ref{e1}.
Consider the commutative diagram
\[
\xymatrix{
P(K)\ar[r]\ar[d] &\Br(K)\ar[r]\ar[d] &\Br(X)\ar[r]\ar[d] &H^1(K,P)\ar[r]\ar[d] &0\\
P(K^{\ur})\ar[r] &\Br(K^{\ur})\ar[r] &\Br(X^{\ur})\ar[r] &H^1(K^{\ur},P)\ar[r] &0}
\]
where the top and bottom row are obtained by applying \ref{pb} to $X$ and $X^{\ur}$ respectively. Since $\Br(K^{\ur})=0$ \cite{serre:localfields}, the above diagram reduces to
\[
\xymatrix{
0\ar[r] &\Br_{0}(X)\ar[r]\ar[d] &\Br(X)\ar[r]\ar[d] &H^1(K,P)\ar[r]\ar[d] &0\\
&0\ar[r] &\Br(X^{\ur})\ar[r] &H^1(K^{\ur},P)\ar[r] &0}
\]
Snake Lemma then yields the exact sequence
$$0 \ra \Br_{0}(X) \ra \Br_{\ur}(X) \ra H^1(K^{\ur}/K,P(K^{\ur})) \ra 0$$
which yields the isomorphism 
$$\Br_{\ur}(X)/\Br_{0}(X)\isom H^1(K^{\ur}/K,P(K^{\ur}))$$
The exact sequence \ref{e1} can now be given as
\begin{equation}\label{e2}
0 \to \Z/d\Z \to \TT(A/K) \to \Br_{\ur}(X)/\Br_{0}(X) \to 0
\end{equation}
Dualizing this sequence, we obtain
\begin{equation}\label{e3}
0 \to {\big(\Br_{\ur}(X)/\Br_{0}(X)\big)}^{\vee} \to {\TT(A/K)}^{\vee} \to {\Z/d\Z}^{\vee} \to 0
\end{equation}
Here $M^{\vee}=\Hom(M,\Q/\Z)$ denotes the Pontryagin dual of $M$. By \ref{ttdual}, ${\TT(A/K)}^{\vee}\isom\Phi_{\ssstyle{A}}(k)$. On the other hand, the homomorphism $\alpha:1\mapsto\big(\frac{1}{d}+\Z\big)$ has order $d$ and generates $\Hom(\Z/d\Z,\Q/\Z)$, so that $\Hom(\Z/d\Z,\Q/\Z)\isom\Z/d\Z$.
Thus \ref{e3} can be given as
\begin{equation}\nonumber
0 \to \Hom\big(\Br_{\ur}(X)/\Br_{0}(X),\Q/\Z\big) \to \Phi_{\ssstyle{A}}(k) \to \Z/d\Z \to 0
\end{equation}
\end{proof}

The following corollary is immediate

\begin{corollary}\label{cor1}
The quotient group $\Br_{\ur}(X)/\Br_{0}(X)$ is finite of order $c_{\ssstyle{A}}/d$.
\end{corollary}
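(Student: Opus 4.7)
The plan is to read off the corollary directly from the machinery already built in the proof of Theorem~\ref{mt}. The exact sequence
$$0 \to \Hom\bigl(\Br_{\ur}(X)/\Br_{0}(X),\Q/\Z\bigr) \to \Phi_{\ssstyle{A}}(k) \to \Z/d\Z \to 0$$
in the statement of Theorem~\ref{mt} is already the key input. Since $\Phi_{\ssstyle{A}}(k)$ is finite of order $c_{\ssstyle{A}}$ by definition of the Tamagawa number, and $\Z/d\Z$ is finite, exactness forces $\Hom\bigl(\Br_{\ur}(X)/\Br_{0}(X),\Q/\Z\bigr)$ to be finite of order $c_{\ssstyle{A}}/d$.

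The one small point to address is that finiteness of $\Hom(G,\Q/\Z)$ does not a priori imply finiteness of $G$ — Pontryagin duality requires $G$ to already be known finite (or at least torsion of bounded exponent) before one can conclude $\#G = \#\Hom(G,\Q/\Z)$. The cleanest way to avoid this subtlety is to invoke the intermediate exact sequence \ref{e2},
$$0 \to \Z/d\Z \to \TT(A/K) \to \Br_{\ur}(X)/\Br_{0}(X) \to 0,$$
which was established inside the proof of Theorem~\ref{mt}. By the remark following Corollary~\ref{ttorder}, $\TT(A/K)$ is finite of order $c_{\ssstyle{A}}$, so $\Br_{\ur}(X)/\Br_{0}(X)$ is visibly finite and its order equals $c_{\ssstyle{A}}/d$ by multiplicativity of orders in a short exact sequence of finite abelian groups.

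There is essentially no obstacle here; the entire content of the corollary is bookkeeping on orders in the two short exact sequences supplied by the theorem. I would write the proof in two lines: cite \ref{e2}, note that the outer terms have orders $d$ and $c_{\ssstyle{A}}$, and conclude.
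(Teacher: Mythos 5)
Your proposal is correct and takes essentially the same route as the paper, which simply declares the corollary immediate from Theorem~\ref{mt}: the substance behind that ``immediate'' is exactly the bookkeeping you perform on the sequence \ref{e2}, $0 \to \Z/d\Z \to \TT(A/K) \to \Br_{\ur}(X)/\Br_{0}(X) \to 0$, together with $\#\TT(A/K)=c_{\ssstyle{A}}$. Your observation that finiteness of $\Hom(G,\Q/\Z)$ alone does not yield finiteness of $G$, and that one should therefore quote \ref{e2} directly rather than the dualized sequence in the theorem's statement, is a legitimate refinement of a point the paper leaves implicit.
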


\begin{remark}
By \cite[Prop 2.1]{thelene:skoro}(which applies to any proper, smooth, geometrically integral variety and not just a curve), the quotient group $\Br_{\ur}(X)/\Br_{0}(X)$ is finite. The order, however, does not seem to be recorded in the literature.
\end{remark}

\begin{corollary}\label{cor2}
Suppose that ${\del}'={\del^{\ur}}'$ (this happens, for example, if $X$ has a $K$-rational point). Then there exists a canonical perfect pairing of finite abelian groups
\begin{equation}\nonumber
\Br_{\ur}(X)/\Br_{0}(X) \times \Phi_{\ssstyle{A}}(k) \lra \Q/\Z
\end{equation}
\end{corollary}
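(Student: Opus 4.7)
The plan is to derive this corollary as a direct specialization of Theorem \ref{mt} to the case $d=1$. Under the hypothesis $\del' = {\del^{\ur}}'$, the integer $d = \del'/{\del^{\ur}}'$ equals $1$, so $\Z/d\Z = 0$ and the three-term exact sequence from the main theorem collapses to a canonical isomorphism
$$\Hom\bigl(\Br_{\ur}(X)/\Br_{0}(X),\Q/\Z\bigr) \isom \Phi_{\ssstyle{A}}(k).$$

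Next I would convert this isomorphism into the claimed pairing via the standard Hom–pairing adjunction. Both groups involved are finite: $\Phi_{\ssstyle{A}}(k)$ is finite by construction, and $\Br_{\ur}(X)/\Br_{0}(X)$ is finite by Corollary \ref{cor1} (with order $c_{\ssstyle A}$, which also matches $\#\Phi_{\ssstyle A}(k)$, as Pontryagin duality of finite abelian groups preserves cardinality). The evaluation map
$$\Br_{\ur}(X)/\Br_{0}(X) \times \Phi_{\ssstyle{A}}(k) \lra \Q/\Z, \qquad (b,\phi) \longmapsto \phi(b),$$
where $\phi \in \Phi_{\ssstyle A}(k)$ is viewed as an element of $\Hom(\Br_{\ur}(X)/\Br_{0}(X),\Q/\Z)$ via the above isomorphism, is then non-degenerate on each side by construction, and hence is a perfect pairing of finite abelian groups. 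Canonicity is inherited from the canonicity of the sequence constructed in the proof of Theorem \ref{mt}, which in turn rested on canonical Galois cohomology and Snake Lemma identifications.

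Finally, for the parenthetical assertion, I would observe that a $K$-rational point of $X$ gives a degree-$1$ divisor, forcing $\del = 1$; hence $\del'$ divides $\del = 1$ and equals $1$, and ${\del^{\ur}}'$ divides $\del'$ so also equals $1$. Thus the hypothesis $\del' = {\del^{\ur}}'$ is satisfied. There is no real obstacle here, since all the substantive work has already been carried out in establishing Theorem \ref{mt}; the only thing to be careful about is the direction of the pairing, which is determined by the fact that $\Phi_{\ssstyle A}(k)$ is identified with the dual of $\Br_{\ur}(X)/\Br_{0}(X)$ rather than the other way around.
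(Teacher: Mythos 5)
Your proposal is correct and takes essentially the same route as the paper: both specialize Theorem \ref{mt} to $d=1$ and convert the resulting isomorphism into a perfect pairing, with the parenthetical claim handled by the divisibilities ${\del^{\ur}}' \mid \del' \mid \del$. The only cosmetic difference is that the paper works with the undualized sequence \ref{e2}, obtaining $\TT(A/K)\isom\Br_{\ur}(X)/\Br_{0}(X)$ and inducing the pairing from Corollary \ref{ttorder}, whereas you use the dualized statement together with the evaluation pairing; these produce the same pairing, since the identification \ref{ttdual} used in dualizing is itself induced by the pairing of Corollary \ref{ttorder}.
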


\begin{proof}
${\del}'={\del^{\ur}}'$ implies that $d=1$. The exact sequence \ref{e2} shows that, in this case, there exists a canonical isomorphism $$\TT(A/K)\isom\Br_{\ur}(X)/\Br_{0}(X)$$ The pairing of the statement is then induced by the pairing of Corollary \ref{ttorder}.
\end{proof}

\begin{remark}
Theorem \ref{mt} shows that $c_{\ssstyle{A}}=\#\Phi_{\ssstyle{A}}(k)$ can be expressed as the product of $\#\big(\Br_{\ur}(X)/\Br_{0}(X)\big)$ and ${\del}'/{\del^{\ur}}'$. On the other hand, Theorem 1.17 in \cite{bl} expresses $\#\Phi_{\ssstyle{A}}(k)$ as the product of a term $\#\big(\Ker(\beta)/\Ima(\alpha)\big)$ and ${\del}/(q{\del^{\ur}})$ (Note that the notations for $\del$ and $\del^{\ur}$ are different in \cite{bl}). Both $\Ker(\beta)$ and $\Ima(\alpha)$ are certain subgroups of the group of Weil divisors on $X$ with support in the special fiber $X_k$. Letting $g$ be the genus of $X$, we have that $q=1$ if $\del$ divides $g-1$, and $q=2$ otherwise. As D.~Lorenzini explained to us, by \cite[Thm 7.b]{licht:duality}, $\del/q={\del}'$ so that ${\del}/(q{\del^{\ur}})={\del}'/{\del^{\ur}}$. The commutative diagram in the proof of \cite[Theorem 3]{licht:duality} then implies that since $\Br(K^{\ur})=0$, we have $\del^{\ur}={\del^{\ur}}'$. Thus we have ${\del}'/{\del^{\ur}}'={\del}'/{\del^{\ur}}={\del}/(q{\del^{\ur}})$. It follows that $\#\big(\Br_{\ur}(X)/\Br_{0}(X)\big)=\#\big(\Ker(\beta)/\Ima(\alpha)\big)$. Furthermore, comparing Cor \ref{cor2} with Cor 1.12 in \cite{bl} yields the isomorphism $$\big(\Ker(\beta)/\Ima(\alpha)\big)\isom\Hom\big(\Br_{\ur}(X)/\Br_{0}(X),\Q/\Z\big)$$ when $d=1$.
\end{remark}

\begin{remark}
The surjective map $\Phi_{\ssstyle{A}}(k)\xrightarrow{\beta}\Z/d\Z$ in Theorem \ref{mt} can be made explicit. Consider the commutative diagram
\[
\xymatrix{
\Phi_{\ssstyle{A}}(k)\ar[r]^{\beta}\ar[d]_{\alpha_1} & \Z/d\Z\\
\Hom(\TT(A/K),\Q/\Z)\ar[r]^{\alpha_2} & \Hom(\Z/d\Z,\Q/\Z)\ar[u]_{\alpha_3}}
\]
Here $\alpha_1$ is an isomorphism via the canonical perfect pairing of finite abelian groups 
$\langle\,,\,\rangle:\Phi_{\ssstyle{A}}\times \TT(A/K)\to \Q/\Z$ \cite[(4.5)]{mcc} which in turn is induced by Grothendieck's pairing $\Phi_{\ssstyle{A}}\times\Phi_{\ssstyle{A}}\to\Q/\Z$ \cite[(2.1)]{mcc}. The map $\alpha_3$ is an isomorphism as explained in the proof of Theorem \ref{mt} above. Finally, the horizontal map $\alpha_2$ is induced by the injective map $\Z/d\Z\xrightarrow{\Delta}\TT(A/K)$ in the exact sequence \ref{e2}. Note that $\Delta$ is induced by the connecting homomorphism of the long exact sequence induced by the exact sequence $0\to A\to P\to \Z\to 0$ over $K^{\ur}/K$. If $x\in\Phi_{\ssstyle{A}}(k)$, then the composition $(\alpha_2\circ\alpha_1)(x)$ is the homomorphism $\sigma:1\mapsto\langle x,\Delta(1)\rangle$. Since $\Hom(\Z/d\Z,\Q/\Z)$ is generated by $\alpha:1\mapsto{\frac{1}{d}+\Z}$, $\sigma=m\alpha$ for some  $0<m\leq{d-1}$. Then $\alpha_3(\sigma)=m$, and we let $\beta(x)=m$.
On the other hand, the injective map $\Hom(\Br_{\ur}(X)/\Br_{0}(X),\Q/\Z) \to \Phi_{\ssstyle{A}}(k)$ in Theorem \ref{mt} is induced by $\TT(A/K)\to H^1(K^{\ur}/K,P(K^{\ur}))\isom\Br_{\ur}(X)/\Br_{0}(X)$. As shown in the proof of Theorem \ref{mt} above, the map $\TT(A/K)\to H^1(K^{\ur}/K,P(K^{\ur}))$ is the restriction of the map $H^1(K,A)\to H^1(K,P)$ to $\TT(A/K)$, and the latter map is induced by the surjective map $A\to P$. Finally, the isomorphism $H^1(K^{\ur}/K,P(K^{\ur}))\isom\Br_{\ur}(X)/\Br_{0}(X)$ is induced by the map $\Br(X)\xrightarrow{\psi} H^1(K,P)$ as in \ref{pb}. The map $\psi$ is described explicitly in \cite[Rem. 2.3]{milne:brauer}.
\end{remark}

\newpage
\newcommand{\etalchar}[1]{$^{#1}$}

\end{document}